\newtheorem{thm}{Theorem}
\newtheorem{definition}[equation]{Defintion}
\theoremstyle{definition}
\DeclareMathOperator{\End}{End}
\DeclareMathOperator{\Adj}{Adj}
\DeclareMathOperator{\Der}{Der}
\DeclareMathOperator{\Cent}{Cent}
\newcommand\myurl[1]{\url{#1}}
\author{Joshua Maglione}\address{
	Department of Mathematics\\
	Colorado State University\\
	Fort Collins, CO 80523\\
	USA
}
\email{maglione@math.colostate.edu}
\title{Automorphisms of small prime power groups}
\date{\today}
\begin{document}

\begin{abstract}
If $f(p,n)$ is the number of pairwise nonisomorphic groups of order $p^n$, and $g(p,n)$ is the number of groups of order $p^n$ whose automorphism group is a $p$-group, then, for $n\leq 7$, we prove that the ratio $g(p,n)/f(p,n)$ is bounded away from 1 as the prime $p$ grows to infinity.
In addition, we provide some data on the number of groups whose automorphism group is a group of prime power order, for primes no larger than 11. 
\end{abstract}

\maketitle

\section{Introduction}

Within the last decade, we have gained incredible knowledge about the automorphism groups of $p$-groups. 
Nevertheless, we are still unable to answer basic questions like the following found in \cite{M:Survey}. 
If $f(p,n)$ is the number of pairwise nonisomorphic groups of order $p^n$ and $g(p,n)$ the number of groups of order $p^n$ whose automorphism group is a $p$-group, then does
\begin{equation}\label{eqn:question} 
\lim_{n\rightarrow \infty} \dfrac{g(p,n)}{f(p,n)}=1?
\end{equation}

A recent result of Martin and Helleloid-Martin is summarized as follows. The automorphism group of almost all $p$-groups is a $p$-group \cite{HM:Aut}*{Theorem 1}. 
However, Helleloid-Martin's perspective differs from the perspective of Mann's question in (\ref{eqn:question}).
Instead, they consider three parameters: the prime, the number of generators, and the $p$-class.
They prove three limits by fixing two parameters and allowing one to vary, and they provide a list of boundary conditions for which their theorem need not hold.
Here, we verify, in some sense, the necessity of some of those boundary conditions with the following theorem.

\begin{thm}\label{thm:main}
Let $f(p,n)$ be the number of pairwise nonisomorphic groups of order $p^n$ and $g(p,n)$ the number of groups of order $p^n$ whose autmorphism group is a $p$-group. 
If $n\leq7$, then
\[ \lim_{p\rightarrow\infty} \dfrac{g(p,n)}{f(p,n)} \leq \frac{2}{3}.\]
\end{thm}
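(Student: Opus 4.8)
The plan is to reduce the limit to a ratio of leading coefficients and then to detect, within the families that dominate $f(p,n)$, a definite proportion of groups that are forced to carry a non-$p$ automorphism. For each fixed $n\le 7$ the classification of groups of order $p^n$ (for $n\le 6$ the tables of James and others, and for $n=7$ the enumeration of O'Brien and Vaughan-Lee) expresses $f(p,n)$ as a polynomial in $p$ whose coefficients depend only on the residue of $p$ modulo a fixed integer $M$ (the usual $\gcd(p-1,k)$ terms). Restricting $p$ to a single residue class mod $M$, both $f(p,n)$ and $g(p,n)$ become honest polynomials in $p$, so $\lim_{p\to\infty} g(p,n)/f(p,n)$ is the ratio of their leading coefficients (or $0$ if $\deg g<\deg f$). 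It therefore suffices to control the groups contributing to the top-degree term $c_n p^{d_n}$ of $f(p,n)$, where $d_5=1$, $d_6=2$, and $d_7$ is read off from the O'Brien--Vaughan-Lee count; for $n\le 4$ the counts are eventually constant, and a direct inspection of the finitely many isomorphism types confirms the bound in those cases.

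The key structural input is a criterion for $\Aut(G)$ to fail to be a $p$-group. Since $n\le 7<p$ for all large $p$, the Lazard correspondence identifies $G$ with a nilpotent Lie algebra $L$ over $\F$ of the same order and class. I would use the observation that any \emph{gradable} $L$ of exponent $p$ (one admitting a genuine $\mathbb{Z}$-grading $L=\bigoplus_i L_i$ with $L_1$ generating) carries the torus of grading automorphisms sending $x\mapsto \lambda^i x$ for $x\in L_i$ and $\lambda\in\F^\times$; for $\lambda\ne 1$ these act nontrivially on $L/[L,L]$, hence are outer automorphisms of order dividing $p-1$, so $\Aut(G)$ is not a $p$-group. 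In particular every group of exponent $p$ and class at most $2$ admits the involution acting by $-\mathrm{id}$ on $L/[L,L]$ and by $\mathrm{id}$ on $[L,L]$. This gives a concrete, classification-independent source of non-$p$ automorphisms, and it is the mechanism I expect to supply the required proportion of ``bad'' groups in each dominant family.

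With the criterion in hand, I would run through the dominant families of each order $p^n$ for $5\le n\le 7$, as listed in the classification, and for each parametrized family $\{G_\lambda\}_{\lambda}$ determine the structure of $\Aut(G_\lambda)$ for \emph{generic} $\lambda$: by the above, the gradable exponent-$p$ families are entirely non-$p$, and for the remaining families one computes the reductive (Levi) part of $\Aut(G_\lambda)$ through its action on the associated graded/bilinear data---equivalently, through the pseudo-isometry group of the defining map---and checks that this part is nontrivial for all but finitely many $\lambda$. Summing the top-degree contributions of the families whose generic member has a non-$p$ automorphism shows that they account for at least $\tfrac13 c_n$ of the leading coefficient, whence $\lim_{p\to\infty} g(p,n)/f(p,n)\le 1-\tfrac13=\tfrac23$.

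The main obstacle is this last step for $n=6$ and especially $n=7$: there the top-degree term is assembled from several parametrized families, and one must verify that the $p'$-part of $\Aut(G_\lambda)$ is constant on the generic locus and then tally the contributions precisely enough to clear the threshold $\tfrac13$. Genericity is delicate, since special parameter values (fixed points of the gluing action, or $\lambda$ satisfying an algebraic relation) can enlarge $\Aut(G_\lambda)$, but these form an $O(1)$ set and so do not affect the leading coefficients; making this rigorous, and carrying out the Levi-part computation uniformly across the O'Brien--Vaughan-Lee families---most plausibly with the aid of the automorphism-group algorithms for $p$-groups---is where the real work lies.
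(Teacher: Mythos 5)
Your overall strategy---reduce the limit to a comparison of leading terms, then exhibit on a definite proportion of the dominant families an automorphism of order prime to $p$ arising from a grading (in the simplest case the involution $\lambda=-1$)---is exactly the mechanism of the paper's proof: its witnesses for $n=5,6,7$ are precisely diagonal involutions on the generators of a homogeneous power-commutator presentation. The genuine gap is that your proposal stops short of the step that carries all of the quantitative content of the theorem. You never identify \emph{which} families admit such a grading, nor verify that they account for at least one third of the leading coefficient of $f(p,n)$; you explicitly defer this as ``where the real work lies.'' Since not every group of order $p^n$ is gradable (otherwise $g(p,n)$ would vanish), the threshold $\tfrac13$ cannot be extracted from the criterion alone, and without it no bound below $1$ follows. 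The paper closes exactly this gap by pointing at explicit parametrized families in the classification literature: for $n=5$, the groups with $G'=G^p=Z(G)\cong\mathbb{Z}_p^2$, of which there are $p+O(1)$ among $2p+O(1)$ groups; for $n=6$, the immediate descendants of a class-two exponent-$p$ group satisfying the homogeneous relations (\ref{set1}), giving $p^2+O(p)$ of $3p^2+O(p)$; and for $n=7$, those satisfying (\ref{set2}), giving $p^5+O(p^4)$ of $3p^5+O(p^4)$. Each such family visibly carries an involution inverting some generators and fixing the rest, which is all that is needed.

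Two smaller points. First, for the families that are not obviously gradable you propose to compute the reductive part of $\Aut(G_\lambda)$ through the pseudo-isometry group of the associated bilinear map; this is far more than the statement requires and is essentially as hard as the isomorphism problem for these groups---a single explicit non-$p$ automorphism on a third of the groups suffices. Second, the Lazard correspondence, while available since $p>n$ eventually, is not actually needed: the involutions can be verified directly against the defining relations, and your class-$\le 2$ exponent-$p$ special case does not cover the dominant $n=6,7$ families, which have class $3$.
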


A sharper upper bound likely exists --- certainly, for individual $n$ values. 
The interest is not in the limit itself, but in the fact that this limit is bounded away from 1. 
Thus, we do not get a statement like that of Helleloid-Martin for small $p$-groups. 
In fact, it may be that the majority of these automorphism groups are not $p$-groups.

In addition to Theorem \ref{thm:main}, we provide data on the number of automorphism groups of various small $p$-groups for $p\leq 11$, which expands on the known values, $p\leq 5$ and $n\leq 7$, as seen in \cite{HM:Aut}*{p. 295}.
In order to make the computation quicker, we use techniques described in \citelist{\cite{W:Char}\cite{M:classical}} to find more characteristic subgroups outside of the standard verbal and marginal ones. 
For the majority of these groups, we find many more characteristic subgroups. 
In order to carry out these computations for groups of order $11^7$, we use ideas in \cite{BMW:genus2}. 
Indeed, the groups of genus 1 and 2 are still quite an obstacle; some of whose automorphism group requires several hours to compute in {\sc Magma} V2.21-1 on an Intel Xeon W3565 at 3.20 GHz.
These tables require about 8 months of computation even with the state of the art algorithms. 

\section{Preliminaries}\label{sec:filters}

\subsection{Notation} We denote the set of nonnegative integers by $\mathbb{N}$, and the set of all subsets of a set $G$ by $2^G$. For groups and rings, we follow standard notation found in \cite{G:GT}. For $g,h\in G$, we set 
\[ [g,h]=g^{-1}g^{h}=g^{-1}h^{-1}gh;\] 
for $X,Y\subseteq G$, we set 
\[ [X,Y]=\langle [x,y] : x\in X, y\in Y\rangle.\]
We let $\mathbb{Z}_p$ denote the group $\mathbb{Z}/p\mathbb{Z}$.

For a $p$-group $G$, the lower central series of $G$ is defined recursively with $\gamma_1(G)=G$ and $\gamma_{i+1}(G)=[\gamma_i(G),G]$. 
Similarly, the exponent $p$-central series of $G$ is defined by $\eta_1(G)=G$, and $\eta_{i+1}(G) = [\eta_i(G),G]\eta_i(G)^p$. 
If $\gamma_c(G)\ne 1$ and $\gamma_{c+1}(G)=1$, then $G$ has (nilpotency) \emph{class $c$}. 
Similarly, if $\eta_c(G)\ne 1$ and $\eta_{c+1}(G)=1$, then $G$ has \emph{$p$-class $c$}.
Suppose $P$ and $G$ are $p$-groups with $p$-class $c$ and $c+1$ respectively. 
We say $G$ is an \emph{immediate descendant} of $P$ if $G/\eta_{c+1}(G) \cong P$. 
If $G$ is $p$-class 2, of order $p^n$, and $\log_p [G:\Phi(G)]=d$, then, for our purposes, we say the \emph{genus} of $G$ is $n-d$. 
A more general definition of genus in the context of groups is described in \cite{BMW:genus2}.

\subsection{Filters}

We use filters to gain exponential improvements in computing the automorphism groups of some small $p$-groups.
In \cite{W:Char}, J.\! B.\! Wilson introduced filters as a generalization of an $N$-series, defined by Lazard in \cite{L:Nseries}. 
The appeal of filters is to have access to an associated graded Lie ring, and hence, they are easy to compute with on account of capuring linear structure.

\begin{definition}
Let $\langle M,0,+,\preceq\rangle$ be a commutative monoid with pre-order, and $G$ a group. A \emph{filter} is a function $\phi$ from $M$ into the subgroups of $G$ satisfying the following conditions for all $s,t\in M$:
\begin{enumerate}
\item $[\phi_s,\phi_t] \leq \phi_{s+t}$, and
\item $s\preceq t$ implies $\phi_s\geq \phi_t$.
\end{enumerate}
\end{definition}

Every filter $\phi:M \rightarrow 2^G$ induces a \emph{boundary filter} $\partial\phi: M \rightarrow 2^G$ where $\partial\phi_s = \langle \phi_{s+t} | t\in M-\{0\}\rangle$. 
With this, we can define a graded Lie ring for $G$.
Define $L(\phi) = \bigoplus_{s\in M} \phi_s / \partial\phi_s$, where $L_0=0$, with multiplication given by
\[ [ \partial\phi_s x, \partial\phi_t y ] = \partial\phi_{s+t} [x,y]. \] 

\begin{thm}[\cite{W:Char}*{Theorem 3.1}]
$L(\phi)$ is an $M$-graded Lie ring.
\end{thm}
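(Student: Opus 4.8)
The plan is to verify the Lie-ring axioms one at a time, reducing each to a commutator identity in $G$ read modulo the relevant boundary subgroup; the grading $[L_s,L_t]\subseteq L_{s+t}$ is forced by condition~(1) and so needs no separate argument. Throughout I use that the monoid order is positive, i.e.\ $0\preceq w$ for every $w\in M$, so that $s\preceq s+w$ (as holds for the indexing monoids of interest), which with condition~(2) gives $\phi_{s+w}\le\phi_s$ and hence $\partial\phi_s\le\phi_s$. The first task is to see that each $L_s=\phi_s/\partial\phi_s$ is a well-defined abelian group. Normality of $\partial\phi_s$ in $\phi_s$ follows because, for $g\in\phi_s$ and a generating element $h\in\phi_{s+w}$ (with $w\ne 0$) of $\partial\phi_s$, condition~(1) puts $[h,g]\in\phi_{s+w+s}\le\phi_{s+w}\le\partial\phi_s$, so $h^g=h[h,g]\in\partial\phi_s$; and commutativity of the quotient is exactly $[\phi_s,\phi_s]\le\phi_{2s}\le\partial\phi_s$ for $s\ne 0$ (with $L_0:=0$ by fiat). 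Thus $L(\phi)=\bigoplus_s L_s$ is a genuine abelian group.

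Next I would show the bracket $[\partial\phi_s x,\partial\phi_t y]=\partial\phi_{s+t}[x,y]$ is independent of the chosen representatives and biadditive. For $a\in\partial\phi_s$, the identity $[xa,y]=[x,y]^a[a,y]$ reduces the first claim to checking that $[a,y]$ and $[[x,y],a]$ both lie in $\partial\phi_{s+t}$, which follows from condition~(1) and positivity via $[\partial\phi_s,\phi_t]\le\partial\phi_{s+t}$ and $[\phi_{s+t},\partial\phi_s]\le\partial\phi_{s+t}$; the second argument is symmetric. Biadditivity in the first slot is the identity $[xx',y]=[x,y]^{x'}[x',y]$ read in $L_{s+t}$, where $[x,y]^{x'}=[x,y][[x,y],x']\equiv[x,y]$ because $[[x,y],x']\in\phi_{2s+t}\le\partial\phi_{s+t}$ (here $s\ne 0$); the second slot is symmetric, and extending over the direct sum yields a $\mathbb{Z}$-bilinear bracket with $[L_s,L_t]\subseteq L_{s+t}$.

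Finally I would establish the two defining identities. The alternating law is immediate on homogeneous elements, $[\partial\phi_s x,\partial\phi_s x]=\partial\phi_{2s}[x,x]=0$, and anticommutativity on homogeneous pairs comes from $[y,x]=[x,y]^{-1}$; biadditivity promotes these to $[v,v]=0$ for all $v\in L(\phi)$. The Jacobi identity is the crux, and the step I expect to be the main obstacle. I would derive it from the Hall--Witt identity $[[x,y^{-1}],z]^{y}\,[[y,z^{-1}],x]^{z}\,[[z,x^{-1}],y]^{x}=1$ by reducing modulo $\partial\phi_{s+t+u}$ for $x\in\phi_s$, $y\in\phi_t$, $z\in\phi_u$. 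The key observation is that each factor lies in $\phi_{s+t+u}$, so conjugating it by $x$, $y$, or $z$ perturbs it only by a commutator in $\phi_{s+t+u+w}$ with $w\in\{s,t,u\}\setminus\{0\}$, hence trivially in the quotient; likewise $[x,y^{-1}]\equiv[x,y]^{-1}\pmod{\partial\phi_{s+t}}$, and the induced change in $[[x,y^{-1}],z]$ is again confined to $\partial\phi_{s+t+u}$. Carrying out these reductions on all three factors collapses Hall--Witt to $[[x,y],z]\,[[y,z],x]\,[[z,x],y]\equiv 1\pmod{\partial\phi_{s+t+u}}$ --- up to inverting all three factors simultaneously, which is immaterial since $L_{s+t+u}$ is abelian --- and this is precisely the Jacobi identity in $L_{s+t+u}$. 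Biadditivity then extends it to all of $L(\phi)$, completing the verification that $L(\phi)$ is an $M$-graded Lie ring. The bookkeeping in this last reduction, ensuring that every stray commutator and conjugation genuinely lands in $\partial\phi_{s+t+u}$, is where the real care is needed.
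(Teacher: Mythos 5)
The paper does not prove this statement---it is imported verbatim as Theorem 3.1 of \cite{W:Char}---so there is no internal proof to compare your attempt against. Your verification is correct and is essentially the standard Lazard-style argument used in that source: well-definedness, normality of the boundary terms, and biadditivity of the induced bracket all follow from the filter axioms together with positivity of the pre-order, and the Jacobi identity falls out of the Hall--Witt identity once every conjugation error and the discrepancy between $[x,y^{-1}]$ and $[x,y]^{-1}$ is absorbed into $\partial\phi_{s+t+u}$.
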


As the filter refines so does its associated Lie ring. 
Our aim is to construct long characteristic series with an associated graded Lie algebra. We assume our monoids are $\mathbb{N}^d$ for some $d$. 
Provided we can find a new characteristic subgroup to add to our filter, we can use this new subgroup to generate more characteristic subgroups throughout the filter. 
Indeed, suppose $\pi : X\rightarrow 2^G$ satisfies 
\begin{enumerate}
\item $0\in X\subseteq \mathbb{N}^d$ and $\langle X \rangle = \mathbb{N}^d$;
\item if $s\in X$ and $t\in \mathbb{N}^d$ with $t\preceq s$, then $t\in X$;
\item for all $s\in X$, $\pi_s\trianglelefteq G$;
\item if $s,t\in X$ with $s\preceq t$, then $\pi_s\geq \pi_t$.
\end{enumerate}
For $s\in \langle X\rangle$, a \emph{partition} of $s$ with respect to $X$ is a sequence $(P_1,...,P_k)$ where each $P_i\in X$ and $s = \sum_{i=1}^kP_i$. 
Let $\mathcal{P}_X(s)$ denote the set of partitions of $s\in \langle X\rangle$ with respect to $X$, and if $P=(P_1,...,P_k)\in\mathcal{P}_X(s)$, then set
\[ [\pi_P ] = [ \pi_{P_1}, ... ,\pi_{P_k} ].\] 
We generate a filter $\overline{\pi} : \langle X\rangle \rightarrow 2^G$ defined as follows
\[ \overline{\pi}_s = \prod_{P\in\mathcal{P}_X(s)} [\pi_P] ,\]
cf. \cite{W:Char}*{Theorem 3.3}.

We are back to the problem of finding characteristic subgroups in a $p$-group. 
Suppose we start with the filter $\eta:\mathbb{N}\rightarrow 2^G$ given by the exponent $p$-central series of $G$. 
Then $L(\eta)$ has an associated $\mathbb{N}$-graded Lie algebra, which yeilds $\mathbb{Z}_p$-bilinear maps from the graded product (eg.\!\! $[,] : L_s\times L_t \rightarrowtail L_{s+t}$). 
We turn to some associated algebras for these bilinear maps. 
Suppose $\circ : U \times V\rightarrowtail W$ is a biadditive map of abelian groups; define the adjoint, centroid, and derivation rings as
\begin{align*}
\Adj( \circ ) &= \{ (f,g) \in \End(U)\times \End(V)^{\text{op}} : \forall u\in U, \forall v\in V, uf \circ v = u \circ gv \},\\
\Cent( \circ ) &= \{ (f,g,h) \in \End(U)\times \End(V)\times \End(W) : \forall u\in U, \forall v\in V, \forall w\in W, \\
&\qquad uf \circ v = u \circ vg = (u\circ v)h \}, \text{ and }\\
\Der( \circ ) &= \{ (f,g,h) \in \mathfrak{gl}(U)\times \mathfrak{gl}(V)\times \mathfrak{gl}(W) : \forall u\in U, \forall v\in V, \forall w\in W, \\
&\qquad uf \circ v + u \circ vg = (u\circ v)h \}.
\end{align*}

It is in these rings we find more characteristic structure in $G$ \cite{W:Char}*{Section 4}. Indeed, the Jacobson radical acts on the homogeneous components and yields characteristic subgroups (for $\Der(\circ)$, this is done in the associative enveloping algebra). 

\section{Data}

We provide data on the number of $p$-groups whose automorphism group is also a $p$-group in Tables \ref{fig:aut_p-grp} and \ref{fig:aut_p-grp2}. 
We examine the groups of order 512, and for each group, we record the the change in length of the filter and the change in order of the largest factor. 
The longer the filter, or the smaller the largest factor, the faster we can compute the automorphism groups. 
Our data is found in Figures \ref{fig:Lengths} and \ref{fig:MaxSections}. 
Some data on the kinds of refinements which seem most successful can be found in \cite{W:filters}.
All of these computations were done in {\sc Magma} \cite{Magma}.

\begin{table}[h]
\begin{tabular}{p{2cm}p{2.5cm}p{2.5cm}p{3cm}}
  \toprule
  Order  & $p^5$    & $p^6$ & $p^7$ \\
  \midrule
  $p=2$  & 36 of 51 & 211 of 267  & 2,067 of 2,328 \\
  $p=3$  & 0 of 67  & 30 of 504   & 2,119 of 9,310 \\
  $p=5$  & 1 of 77  & 65 of 674   & 11,895 of 34,297 \\
  $p=7$  & 0 of 83  & 91 of 860   & 42,208 of 113,147 \\
  $p=11$ & 1 of 87  & 189 of 1,192 & 286,385 of 750,735 \\
  \bottomrule
\end{tabular}
\caption{The number of $p$-groups whose automorphism group is a $p$-group.}
\label{fig:aut_p-grp}
\end{table}

\begin{table}[h]
\begin{tabular}{p{2cm}p{4cm}p{4cm}}
\toprule
Order  & $p^8$    & $p^9$ \\ 
\midrule
$p=2$  & 54,463 of 56,092 & 10,477,331 of 10,494,213 \\
$p=3$  & 1,002,258 of 1,396,077 & \\
\bottomrule
\end{tabular}
\caption{The number of $2$-groups whose automorphism group is a $2$-group.}
\label{fig:aut_p-grp2}
\end{table}

The timing of the algorithm for computing the automorphism group of a $p$-group is heavily dependent on the the orders of the factors of the characteristic series it works with \citelist{\cite{O:p-grp-alg}\cite{ELGO:Auts}}. 
In fact, the algorithm is exponential in the order of the largest factor. 
Therefore, Figure \ref{fig:Lengths} is an incomplete picture of how the rings in Section \ref{sec:filters} aid in the computation.
On the other hand, we see that, in Figure \ref{fig:MaxSections}, about $80\%$ of groups of order 512 have an exponential speed-up. 
Thus, without these improvements, the computation of the automorphism groups would be orders of magnitude harder.

\begin{figure}[h]
\begin{subfigure}[b]{.5\linewidth}
\centering
\includegraphics[width=.95\linewidth]{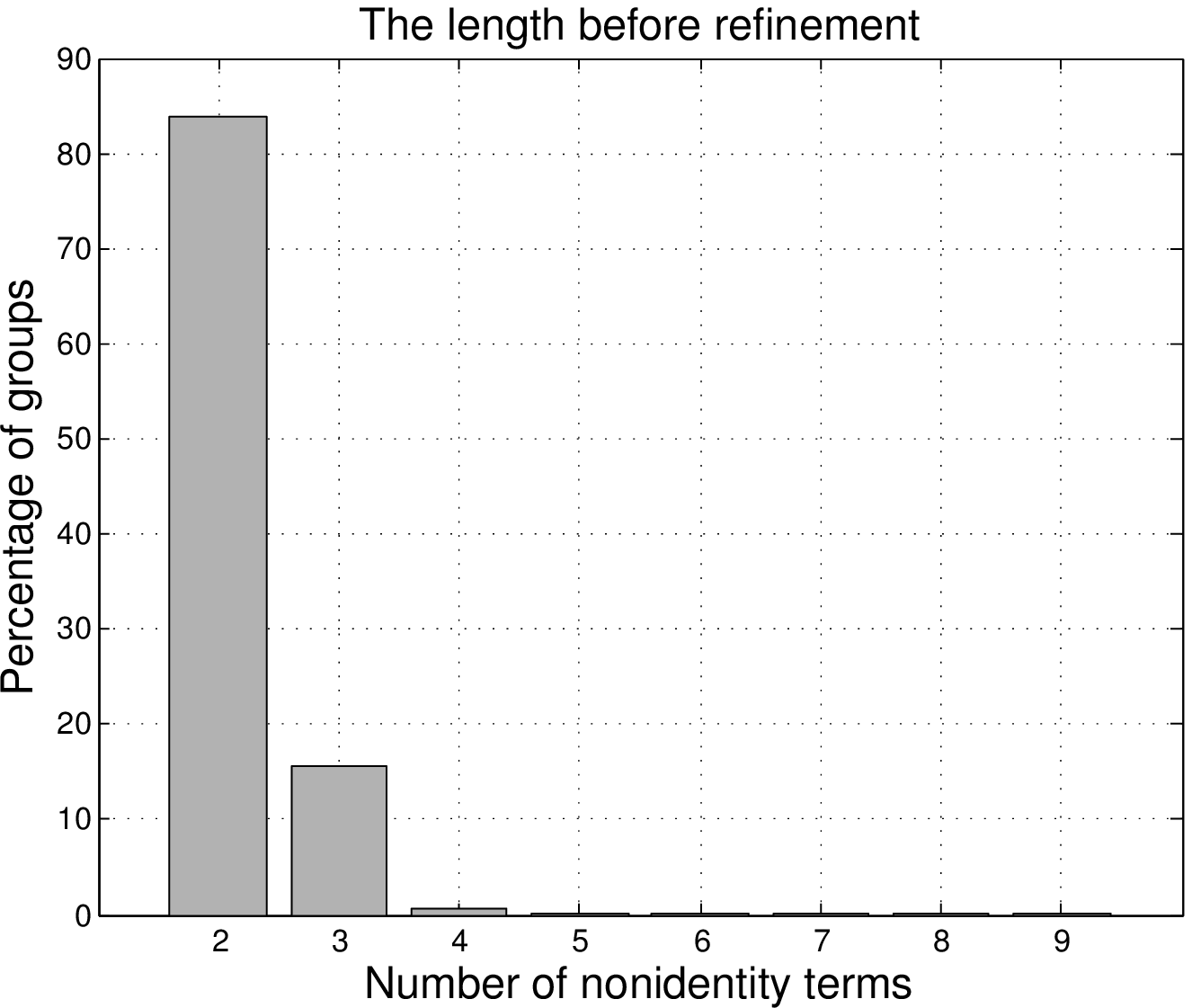}
\subcaption{Exponent $p$-central series.}
\end{subfigure}%
\begin{subfigure}[b]{.5\linewidth}
\centering
\includegraphics[width=.95\linewidth]{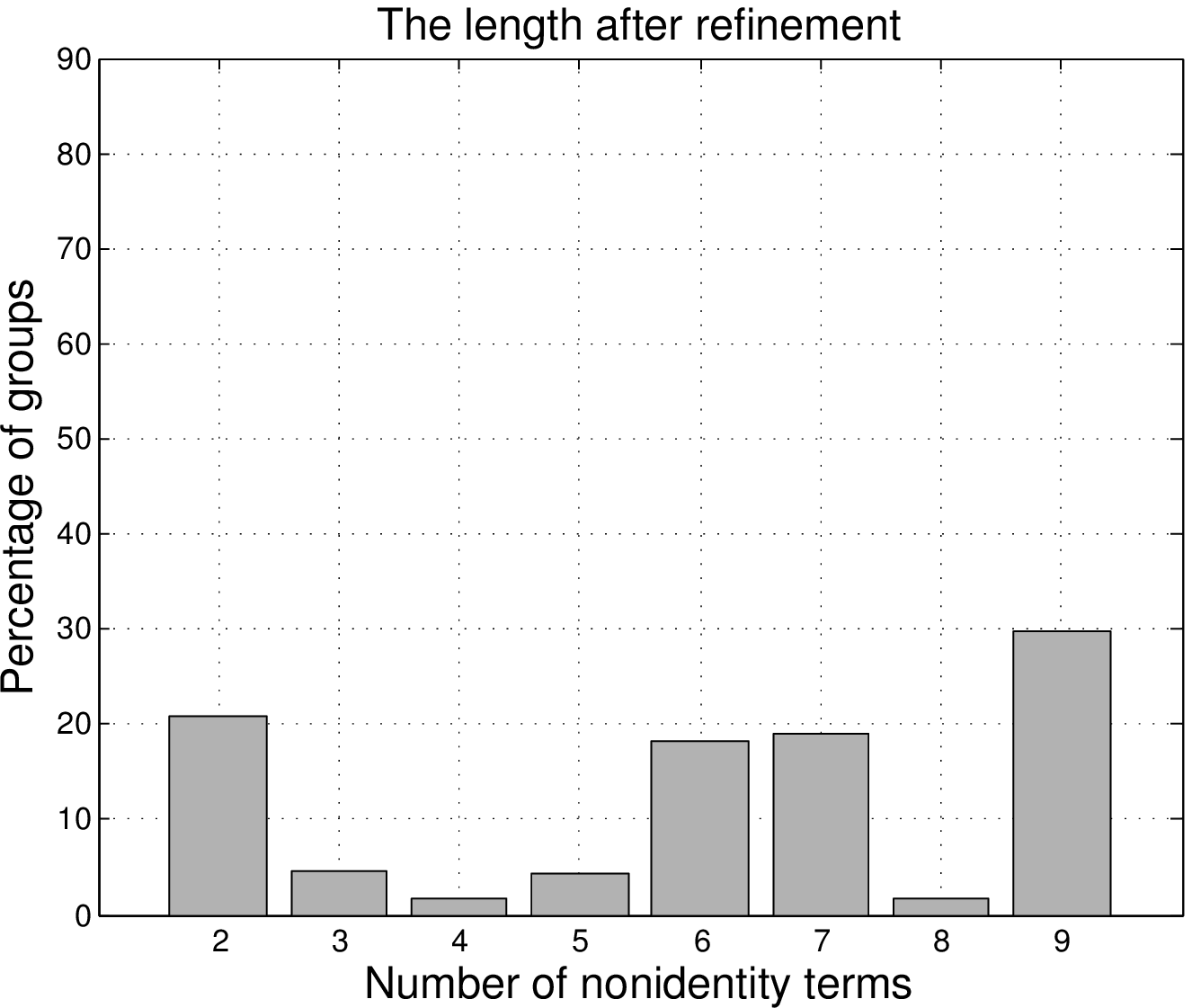}
\subcaption{Fully refined filter.}
\end{subfigure}
\caption{The bar graphs show how long a characteristic series gets by refining the $p$-central series using $\Adj(\circ)$, $\Cent(\circ)$, and $\Der(\circ)$ for groups of order 512. The maximum possible is 9.}\label{fig:Lengths}
\end{figure}

\begin{figure}[h]
\begin{subfigure}[b]{.5\linewidth}
\centering
\includegraphics[width=.95\linewidth]{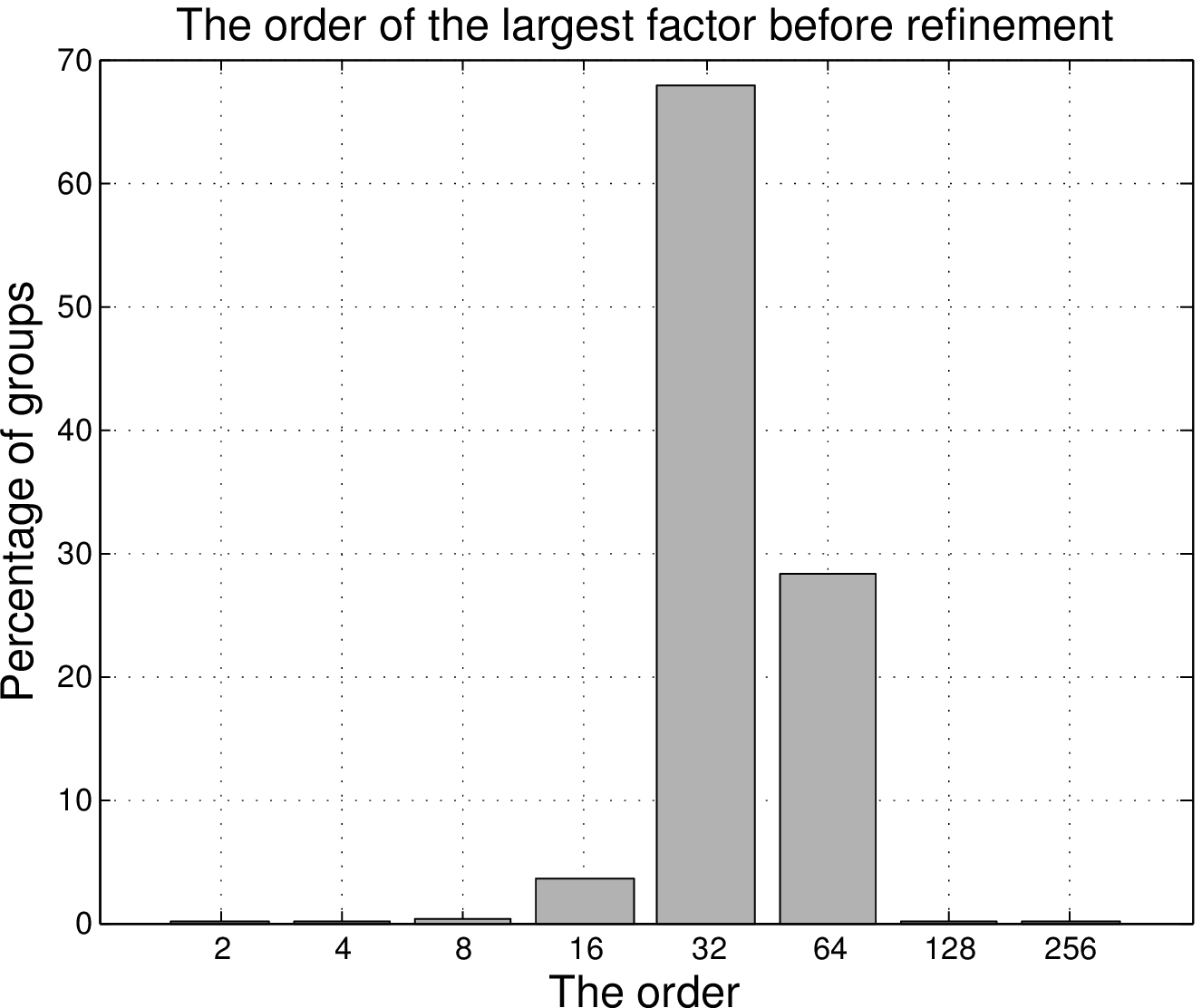}
\subcaption{Exponent $p$-central series.}
\end{subfigure}%
\begin{subfigure}[b]{.5\linewidth}
\centering
\includegraphics[width=.95\linewidth]{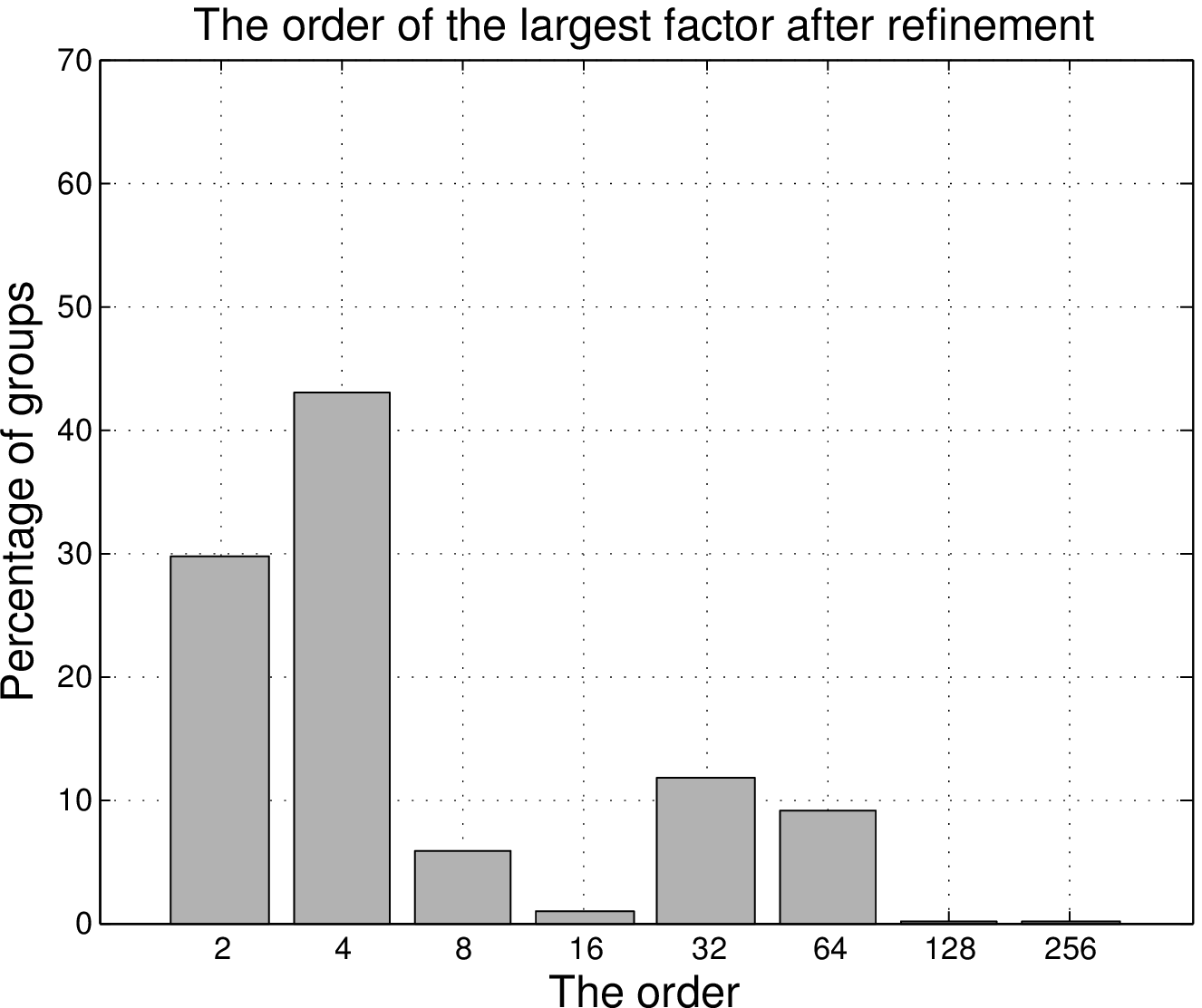}
\subcaption{Fully refined filter.}
\end{subfigure}
\caption{The bar graphs show the orders of the largest factors in each of the series for groups of order 512. The larger the sections, the harder it is to compute the automorphism group.}\label{fig:MaxSections}
\end{figure}

\section{Proof of Theorem \ref{thm:main}}

We prove Theorem \ref{thm:main} by considering the possible values for $n$, while assuming $p$ is an odd prime. 
The statement of the theorem is immediate for $n\leq 4$ because, in that case, there is a constant number of groups of order $p^n$. 
For each $n\leq 4$, at least one third of the groups of order $p^n$ are abelian. Thus, for each of those groups, there exists an automorphism which is an involution.

\begin{proof}[Proof of Theorem \ref{thm:main}]
The theorem follows for $n\leq 4$, so we first consider when $n=5$. 
Suppose $G$ has order $p^5$ and $G'=G^p=Z(G)\cong\mathbb{Z}_p^2$. 
There exists generators $x,y,z$ for $G$ such that $G'=\langle [y,x],[z,x] \rangle$ and $[z,y]=1$. 
In addition, we assume that $G^p=\langle y^p, z^p\rangle$. 
There are at least $p+O(1)$ such groups \cite{VL:p^5}*{p. 12}. 
If $x^p\ne 1$, then there exists $a,b\in\mathbb{Z}_p$ such that $\overline{x}=xy^az^b$ and $\overline{x}^p=1$ because $G^p=\langle y^p,z^p\rangle$. 
Thus, without loss of generality, we assume $x^p=1$. The map fixing $x$ and inverting $y$ and $z$ induces an automorphism of $G$. 
Since there are $2p+O(1)$ groups of order $p^5$, the theorem follows for $n=5$.

Now suppose $n=6$, $p\geq 5$, and $G$ is a group of order $p^6$, which is an immediate descendant from 
\[ P = \langle x,y,z \mid [z,x] = [z,y] = 1, \text{class 2}, \text{exponent } p\rangle.\]
Nearly all groups of order $p^6$ are immediate descendents of $P$, but about one third of them satisfy these equations
\begin{equation}\label{set1}
\begin{aligned}
  \left[z,x\right] &= [y,x,x], \\
  [z,y] &= [y,x,z] = z^p = 1.
\end{aligned}
\end{equation}

The map which inverts $x$ and $y$ and fixes $z$ induces an automorphism of $G$. 
There are $p^2 + O(p)$ groups of order $p^6$ that satisfy (\ref{set1}) \cite{VL:p^6}*{p. 37}, and there are $3p^2+O(p)$ groups of order $p^6$ \cite{NOVL:p^6}.

Let $G$ be a group of order $p^7$ for $p\geq 5$, and suppose that $G$ is an immediate descendant of 
\[ Q = \langle x, y, z \mid [z,y] = 1, \text{class 2}, \text{exponent } p\rangle.\]
An abundant source of these descendants satisfy these sets of equations, as seen in \cite{OVL:Notes},
\begin{align}
1=[z,y]&=[y,x,z],\nonumber\\
[y,x,x]&=[z,x,z],\label{set2}\\ 
[z,x,x]&=[y,x,y].\nonumber
\end{align}

The map which inverts $x$, $y$, and $z$ induces an automorphism of $G$.
There are $p^5 + O(p^4)$ groups that satisfy (\ref{set2}) \cite{OVL:Notes}*{pp. 56 -- 57}. 
Since there are $3p^5 + O(p^4)$ groups of order $p^7$ for $p\geq 5$, the theorem follows.
\end{proof}


Because Helleloid-Martin consider a different ratio from $g(p,n)/f(p,n)$, Theorem \ref{thm:main} does not fit nicely into their framework. 
However, their result is the closest comparison to Theorem \ref{thm:main}. 
In fact, when they fix the number of generators and $p$-class but let the prime vary, they do not consider the families of groups in the proof of Theorem \ref{thm:main}, and
in this light, it seems reasonable to guess that the theorem of Helleloid-Martin might not hold for smaller values than they reported. 
Therefore, while improvements for the choice of values for the parameters might be possible, we suggest that they are likely close to optimal.

\section*{Acknowledgements}
The author is thankful to James B. Wilson and Alexander Hulpke for helpful feedback and to Eamonn A. O'Brien for his initial question. 

\begin{bibdiv}
\begin{biblist}

\bib{Magma}{article}{
   author={Bosma, Wieb},
   author={Cannon, John},
   author={Playoust, Catherine},
   title={The Magma algebra system. I. The user language},
   note={Computational algebra and number theory (London, 1993)},
   journal={J. Symbolic Comput.},
   volume={24},
   date={1997},
   number={3-4},
   pages={235--265},
   issn={0747-7171},
   review={\MR{1484478}},
}

\bib{BMW:genus2}{article}{
   author={Brooksbank, Peter A.},
   author={Maglione, Joshua},
   author={Wilson, James B.},
   title={Isomorphism on the edge of wilderness},
   status={in preparation}
}

\bib{ELGO:Auts}{article}{
   author={Eick, Bettina},
   author={Leedham-Green, C. R.},
   author={O'Brien, E. A.},
   title={Constructing automorphism groups of $p$-groups},
   journal={Comm. Algebra},
   volume={30},
   date={2002},
   number={5},
   pages={2271--2295},
   issn={0092-7872},
   review={\MR{1904637 (2003d:20027)}},
}

\bib{G:GT}{book}{
   author={Gorenstein, Daniel},
   title={Finite groups},
   edition={2},
   publisher={Chelsea Publishing Co., New York},
   date={1980},
   pages={xvii+519},
   isbn={0-8284-0301-5},
   review={\MR{569209 (81b:20002)}},
}

\bib{HM:Aut}{article}{
   author={Helleloid, Geir T.},
   author={Martin, Ursula},
   title={The automorphism group of a finite $p$-group is almost always a
   $p$-group},
   journal={J. Algebra},
   volume={312},
   date={2007},
   number={1},
   pages={294--329},
   issn={0021-8693},
   review={\MR{2320459 (2008h:20035)}},
}

\bib{L:Nseries}{article}{
   author={Lazard, Michel},
   title={Sur les groupes nilpotents et les anneaux de Lie},
   language={French},
   journal={Ann. Sci. Ecole Norm. Sup. (3)},
   volume={71},
   date={1954},
   pages={101--190},
   issn={0012-9593},
   review={\MR{0088496 (19,529b)}},
}

\bib{M:classical}{article}{
   author={Maglione, Joshua},
   title={Longer characteristic series for classical unipotent subgroups},
   status={in press},
   journal={J. Group Theory},
   issn={1435-4446},
}

\bib{M:Survey}{article}{
   author={Mann, Avinoam},
   title={Some questions about $p$-groups},
   journal={J. Austral. Math. Soc. Ser. A},
   volume={67},
   date={1999},
   number={3},
   pages={356--379},
   issn={0263-6115},
   review={\MR{1716701 (2000k:20051)}},
}

\bib{NOVL:p^6}{article}{
   author={Newman, M. F.},
   author={O'Brien, E. A.},
   author={Vaughan-Lee, M. R.},
   title={Groups and nilpotent Lie rings whose order is the sixth power of a
   prime},
   journal={J. Algebra},
   volume={278},
   date={2004},
   number={1},
   pages={383--401},
   issn={0021-8693},
   review={\MR{2068084 (2005c:20034)}},
}

\bib{O:p-grp-alg}{article}{
   author={O'Brien, E. A.},
   title={The $p$-group generation algorithm},
   note={Computational group theory, Part 1},
   journal={J. Symbolic Comput.},
   volume={9},
   date={1990},
   number={5-6},
   pages={677--698},
   issn={0747-7171},
   review={\MR{1075431 (91j:20050)}},
}

\bib{OVL:p^7}{article}{
   author={O'Brien, E. A.},
   author={Vaughan-Lee, M. R.},
   title={The groups with order $p^7$ for odd prime $p$},
   journal={J. Algebra},
   volume={292},
   date={2005},
   number={1},
   pages={243--258},
   issn={0021-8693},
   review={\MR{2166803 (2006d:20038)}},
}

\bib{OVL:Notes}{webpage}{
   author={O'Brien, E. A.},
   author={Vaughan-Lee, M. R.},
   title={Notes on the construction of the groups with order dividing $p^7$},
   myurl={www.math.auckland.ac.nz/~obrien/research/p7.html},
   date={2004},
   note={Accessed March 2015},
}

\bib{VL:p^5}{article}{
   author={Vaughan-Lee, Michael},
   title={Nilpotent Lie rings of order $p^5$},
   note={Notes},
   date={2002},
}

\bib{VL:p^6}{article}{
   author={Vaughan-Lee, Michael},
   title={Nilpotent Lie rings of order $p^6$},
   note={Notes},
   date={2002},
}

\bib{W:Char}{article}{
   author={Wilson, James B.},
   title={More characteristic subgroups, Lie rings, and isomorphism tests
   for $p$-groups},
   journal={J. Group Theory},
   volume={16},
   date={2013},
   number={6},
   pages={875--897},
   issn={1433-5883},
   review={\MR{3198722}},
}

\bib{W:filters}{article}{
   author={Wilson, James B.},
   title={New Lie products for groups and their automorphisms},
   status={in preparation},
   note={\url{arXiv:1501.04670}},
}

\end{biblist}
\end{bibdiv}

\end{document}